\DeclarePairedDelimiter{\ceil}{\lceil}{\rceil}
\DeclareMathOperator{\DeHom}{DeHom}
\DeclareMathOperator{\PGL}{PGL}
\DeclareMathOperator{\GL}{GL}
\DeclareMathOperator{\Cost}{Cost}
\def\vF{\mathbb{F}}
\def\vN{\mathbb{N}}
\def\vZ{\mathbb{Z}}
\def\vA{\mathbb{A}}
\def\vQ{\mathbb{Q}}
\def\vP{\mathbb{P}}
\def\vA{\mathbb{A}}
\theoremstyle{definition}
\newtheorem{definition}{Definition}[section]
\newtheorem{remark}[definition]{Remark}
\newtheorem{example}[definition]{Example}
\theoremstyle{plain}
\newtheorem{theorem}[definition]{Theorem}
\newtheorem{corollary}[definition]{Corollary}
\newtheorem{lemma}[definition]{Lemma}
\newtheorem{proposition}[definition]{Proposition}
\newtheorem{question}{Question}
\newtheorem*{theorem*}{Theorem}
\author[D. Goldfeld]{Dorian Goldfeld}
\address{Department of Mathematics\\
Columbia University \\
2990 Broadway\\ 
New York NY 10027, USA
}
\email{dg15@columbia.edu}
\author[G. Micheli]{Giacomo Micheli}
\address{Mathematical Institute\\
University of Oxford\\
Woodstock Rd\\ 
Oxford OX2 6GG, United Kingdom
}
\email{giacomo.micheli@maths.ox.ac.uk}
\title[Fractional Jumps]{The algebraic theory of Fractional Jumps}
\thanks{The author was supported by the Swiss National Science Foundation grant number 171249.}
\subjclass[2010]{11T06, 37P25,	11K45 .}
\keywords{}
\begin{document}

\begin{abstract}
In this paper we start by briefly surveying the theory of Fractional Jumps and transitive projective maps. Then, we give an efficient construction of a fractional jump of a projective map and we extend the compound generator construction for the Inversive Congruential Generator to Fractional jump sequences. In addition, we provide new results on the absolute jump index, on projectively primitive polynomials, and on the explicit description of fractional jump generators.
\end{abstract}

\maketitle

\section{Introduction}

Generating sequences of pseudorandom numbers is of great importance in applied areas and especially in cryptography and for Monte Carlo methods (for example to compute integrals over the reals). The task of generating streams of pseudorandom numbers is closely related to the study of dynamical systems over finite fields, which have been of great interest recently \cite{bib:FMS16, bib:FMS17, bib:GPOS14,bib:HBM17,bib:OS10degree, bib:OS10length, bib:ost10, bib:OPS10}. More in general, for an interesting survey on open problems in arithmetic dynamics see \cite{benedetto2018current}.
Constructions of pseudorandom number generators are studied for example in \cite{bib:eich91,bib:eich92,bib:eich93,bib:EHHW98,bib:EMG09,bib:NS02,bib:NS03,bib:TW06,bib:winterhof10}. This paper focuses on one of the most recent ones, provided in \cite{bib:AGM17}. In a nutshell, \cite{bib:AGM17} provides a new construction of pseudorandom number sequences using the theory of transitive projective maps. 
From an applied point of view, the interest of this new construction relies on the fact that it costs asymptotically less to compute than the classical Inversive Congruential Generator sequence \cite[Section 7]{bib:AGM17} and also achieves the same  discrepancy bounds as the ICG (see \cite[Section 6]{bib:AGM17}). 
From a purely mathematical perspective, the theory of Fractional Jumps is intimately connected with different areas of mathematics such as finite projective geometry, field theory, additive and analytic number theory, and can turn it into a very rich area of research.

The main task of this paper is to summarise the theory of the fractional jump (FJ) construction and complete some mathematical aspects which were left open in the previous papers. 
%Moreover, we produce new results on projectively primitive polynomials and the distribution of jumps of a fractional jump sequence prodividing a new link with additive number theory.
Finally, we also show that the the compound construction for the Inversive Congruential Generator (ICG) nicely extends to FJs. Also, we leave some open questions at the end of the paper.
%
%The paper is structured as follows. In Section \ref{sec:theoryFJs} we survey the theory of Fractional Jumps giving a comprehensive tratment of its main ingredients, which are transitive projective automorphisms (Subsection \ref{subsec:transitiveproj}) and the jumping construction technique (Subsection \ref{subsec:constructingFjs}). 
%In Section \ref{sec:FJinPractice} we show how FJs can be compactly described (Subsection \ref{subsec:compact}),  quickly evaluated (Subsection \ref{subsec:eval}), and provide the compound construction (Subsection \ref{subsec:compound}).

\subsection*{Notation}
Let $q$ be a prime power, $n$ a positive integer, and $\vF_q$ be the finite field of order $q$. Let $\vA^n$ be the affine space over $\vF_q$ (for the purposes of this paper, this can be simply identified with $\vF_q^n$). Let $\vP^n$ be the projective space of dimension $n$ over $\vF_q$.
Fix the standard projective coordinates $X_0, \ldots, X_n$ on $\vP^n$. 
Let $\GL_{n+1}(\vF_q)$ be the group of invertible matrices over $\vF_q$ and $\PGL_{n+1}(\vF_q)$ be the group of projective automorphisms of $\vP^n$.
For the entire paper we fix the canonical decomposition 
\begin{equation*}                       
\vP^n = U \cup H,
\end{equation*}
where
\begin{align*}
U &= \{[X_0: \ldots: X_n] \in \vP^n \, : \, X_n \neq 0\}\cong \vA^n, \\
H &= \{[X_0: \ldots: X_n] \in \vP^n \, : \, X_n = 0\}\cong \vP^{n-1}.
\end{align*}
For a group $G$ and an element $g\in G$ we denote by $o(g)$ the order of $g$. Let $\Psi\in \PGL_{n+1}(\vF_q)$. We can
write $\Psi$ as $[M]$ for some $M=(m_{i,j})_{i,j} \in \GL_{n+1} (\vF_q)$. Let us denote by $\DeHom(\Psi)$ the $n$-tuple of rational functions
\[(f_1,\dots f_n)=\left(\frac{m_{1,n+1}+\sum^n_{j=1}m_{1,j}x_j}{m_{n+1,n+1}+\sum^n_{j=1}m_{n,j}x_j}, \dots, \frac{m_{n,n+1}+\sum^n_{j=1}m_{n,j}x_j}{m_{n+1,n+1}+\sum^n_{j=1}m_{n,j}x_j}\right).\]
When we have an $n$-tuple $f$ of rational functions of degree $1$ with the same denominator $b$, we say that $b$ is the denominator of $f$. Unless otherwise stated all the logarithms are in basis $2$.

\section{The theory of Fractional Jumps}\label{sec:theoryFJs}
In this section we survey the ingredients needed to construct transitive fractional jumps and give new results on projective primitivity.

\subsection{Transitive projective maps}\label{subsec:transitiveproj}
The first ingredient needed is a transitive automorphism of the projective space.
We start by recalling the definition of projectively primitive polynomials, which are closely related to transitive projective automorphisms.
\begin{definition} \label{projectively_primitive_polynomial}
A polynomial $\chi \in \vF_q [x]$ of degree $m$ is said to be \emph{projectively primitive} if the two following conditions are satisfied:
\begin{enumerate}
\item[i)] $\chi$ is irreducible over $\vF_q$,
\item[ii)] for any root $\alpha$ of $\chi$ in $\vF_{q^m} \cong \vF_q[x] / (\chi)$, the class $[\alpha]$ of $\alpha$ in the quotient group $G = \vF_{q^m}^* / \vF_q^*$ generates $G$.
\end{enumerate} 
\end{definition}

\begin{remark}
Clearly, any primitive polynomial is also projectively primitive.
\end{remark}

%Applying directly \cite[Lemma ]{} we get 
A characterisation can be derived from \cite[Lemma 2]{bib:AGM18} with $e=1$. 
\begin{proposition} \label{orders_in_quotients}
An irreducible polynomial $\chi\in \vF_q[x]$ of degree $m$ is projectively primitive if and only if $x^{q-1}\in \vF_q[x]/(\chi)$ has order 
$(q^{m}-1)/(q-1)$.
\end{proposition}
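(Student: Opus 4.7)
The plan is to compute the order of $[\alpha] = x \bmod \chi$ in the quotient group $G = \vF_{q^m}^*/\vF_q^*$ explicitly in terms of the multiplicative order of $\alpha$ in $\vF_{q^m}^*$, and then observe that this same quantity is also the order of $\alpha^{q-1}$ in $\vF_{q^m}^*$. Once both invariants are pinned to a common value, the desired equivalence is immediate from the fact that $|G| = (q^m-1)/(q-1)$.

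First I would set $d = \ord(\alpha)$ in $\vF_{q^m}^*$, using the isomorphism $\vF_{q^m} \cong \vF_q[x]/(\chi)$ under which $x$ corresponds to a root $\alpha$ of $\chi$. The order of $[\alpha]$ in $G$ is by definition the smallest positive $k$ with $\alpha^k \in \vF_q^*$. Since $\vF_q^*$ is exactly the kernel of raising to the $(q-1)$-th power in $\vF_{q^m}^*$, the condition $\alpha^k \in \vF_q^*$ is equivalent to $\alpha^{k(q-1)}=1$, i.e.\ to $d \mid k(q-1)$. The smallest such $k$ is
\[
\ord_G([\alpha]) \;=\; \frac{d}{\gcd(d,q-1)}.
\]

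Next, the order of $\alpha^{q-1}$ in $\vF_{q^m}^*$ is, by a standard cyclic-group computation, $d/\gcd(d,q-1)$ as well. Therefore $\ord_{\vF_{q^m}^*}(\alpha^{q-1}) = \ord_G([\alpha])$.

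Finally, since $G$ is cyclic of order $(q^m-1)/(q-1)$ (as $\vF_{q^m}^*$ is cyclic), the element $[\alpha]$ generates $G$ precisely when its order equals $|G| = (q^m-1)/(q-1)$. By the identity above, this happens if and only if $\ord(\alpha^{q-1}) = (q^m-1)/(q-1)$ in $\vF_{q^m}^*$, i.e.\ if and only if $x^{q-1} \in \vF_q[x]/(\chi)$ has order $(q^m-1)/(q-1)$. There is no real obstacle; the only subtlety is keeping the two distinct notions of ``order'' (in the quotient $G$ versus in $\vF_{q^m}^*$) separate and observing they coincide via the gcd formula.
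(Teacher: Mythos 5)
Your proof is correct. The paper does not spell out an argument for this proposition — it simply cites \cite[Lemma~2]{bib:AGM18} with $e=1$ — so there is no paper proof to compare against directly; your self-contained argument is the natural one. The crux, that with $d=\ord(\alpha)$ one has both $\ord_G([\alpha])=d/\gcd(d,q-1)$ (via $\vF_q^*=\{y\in\vF_{q^m}^*:y^{q-1}=1\}$) and $\ord(\alpha^{q-1})=d/\gcd(d,q-1)$ (cyclic-group order formula), is exactly what makes the equivalence immediate, and you correctly handle the point that all conjugate roots of $\chi$ share the same multiplicative order, so it suffices to work with the single root $\alpha=x\bmod\chi$.
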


In \cite{bib:AGM17} transitive projective maps were characterised, we report the result here for completeness.

\begin{theorem} \label{transitivity_characterisation}\emph{\cite[Theorem 3.4]{bib:AGM17}}
Let $\Psi$ be an automorphism of $\vP^n$ with $\Psi=[M]\in\PGL_{n+1}(\vF_q)$.  
Then, $\Psi$ is transitive on $\vP^n$ if and only if the characteristic polynomial $\chi_M\in \vF_q [x]$ of $M$ is projectively primitive.
\end{theorem}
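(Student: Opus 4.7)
The plan is to realise the action of $\Psi$ on $\vP^n$ as multiplication by a root of $\chi_M$ on a finite field, modulo its prime subfield. Once that identification is set up cleanly, both directions of the theorem collapse to statements about generators of a cyclic group.

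First I would view $V := \vF_q^{n+1}$ as an $\vF_q[x]$-module where $x$ acts by $M$. The elementary observation driving everything is that any $M$-invariant subspace $W \subsetneq V$ yields a proper nonempty $\Psi$-invariant subset of $\vP^n$, namely the image of $W \setminus \{0\}$ under the quotient $V\setminus\{0\}\to \vP^n$. So if $\Psi$ is transitive, $V$ must be simple as an $\vF_q[M]$-module. Simplicity implies $V \cong \vF_q[x]/(p)$ for some irreducible $p$, and comparing $\vF_q$-dimensions gives $\deg p = n+1$; since on a cyclic $\vF_q[x]$-module the minimal and characteristic polynomials coincide, $p = \chi_M$. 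In particular $\chi_M$ is irreducible, establishing condition (i) of Definition \ref{projectively_primitive_polynomial}.

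Next, fix a root $\alpha$ of $\chi_M$ in $\vF_{q^{n+1}} \cong \vF_q[x]/(\chi_M)$. The module isomorphism above sends $M$ to multiplication by $\alpha$ and, being $\vF_q$-linear, it carries scalar multiplication on $V$ to scalar multiplication on $\vF_{q^{n+1}}$ via the natural inclusion $\vF_q \hookrightarrow \vF_{q^{n+1}}$. Passing to the quotient by $\vF_q^*$ produces a $\Psi$-equivariant bijection
\[
\vP^n \;\longleftrightarrow\; \vF_{q^{n+1}}^{*}/\vF_q^{*}, \qquad \Psi\colon [v]\longmapsto [\alpha v].
\]
The target is a cyclic group of order $(q^{n+1}-1)/(q-1)$, and the map $[v]\mapsto[\alpha v]$ is transitive precisely when $[\alpha]$ generates it. That is condition (ii) of Definition \ref{projectively_primitive_polynomial}, so transitivity of $\Psi$ forces projective primitivity of $\chi_M$. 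The reverse direction reuses the same identification: projective primitivity supplies both the irreducibility of $\chi_M$ (needed to set up the identification) and a generator $[\alpha]$ of $\vF_{q^{n+1}}^{*}/\vF_q^{*}$, so $\Psi$ is conjugate to translation by a generator and is transitive.

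The one step genuinely requiring care is the compatibility check in the second paragraph: I must verify that the module isomorphism $V \cong \vF_q[x]/(\chi_M)$ simultaneously intertwines the $M$-action \emph{and} the $\vF_q^{*}$-scaling, so that the induced map on projective classes is actually $\Psi$-equivariant. Once that compatibility is in place, the rest of the proof is the tautology that multiplication by a fixed element is transitive on a cyclic group if and only if the element is a generator. Alternatively, one could bypass the module-theoretic argument for the forward direction by noting that a transitive cyclic action of $\langle\Psi\rangle$ on $\vP^n$ forces $o(\Psi)=(q^{n+1}-1)/(q-1)$ and comparing with Proposition \ref{orders_in_quotients}, but the approach above is cleaner and yields both implications uniformly.
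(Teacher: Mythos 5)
The paper states this theorem as a cited result from \cite{bib:AGM17} and does not reproduce a proof, so there is no in-paper argument to compare against; I can only assess your argument on its own terms. Your proof is correct and takes the natural route: view $V=\vF_q^{n+1}$ as an $\vF_q[x]$-module via $M$, observe that transitivity of $\Psi$ forces $V$ to be simple, deduce $V\cong\vF_q[x]/(\chi_M)$ with $\chi_M$ irreducible of degree $n+1$, and transport the action across the induced bijection $\vP^n\cong\vF_{q^{n+1}}^*/\vF_q^*$ so that $\Psi$ becomes multiplication by $[\alpha]$; transitivity is then precisely that $[\alpha]$ generates the cyclic quotient, which is condition (ii) of Definition~\ref{projectively_primitive_polynomial}. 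The compatibility check you flag is genuinely the crux but is routine: an $\vF_q[x]$-module isomorphism is in particular $\vF_q$-linear, hence descends to projective classes, and the $x$-equivariance is exactly $M\leftrightarrow\alpha$. Two small points worth tightening: (a) in the forward direction you should quantify $W$ as a \emph{nonzero} proper $M$-invariant subspace, so that $W\setminus\{0\}$ is indeed nonempty; (b) in the reverse direction, spell out that irreducibility of $\chi_M$ forces $\mu_M=\chi_M$, making $V$ a one-dimensional $\vF_{q^{n+1}}$-vector space and hence cyclic, so the same identification $V\cong\vF_q[x]/(\chi_M)$ is available. Neither affects the correctness of the argument.
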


\begin{remark}\label{rem:finding_transitive}
Theorem \ref{transitivity_characterisation} also implies that to find a transitive projective automorphism of $\vP^n$  one can simply fix $\Psi=[M_f]\in \PGL_{n+1}(\vF_q)$, where $M_f$ is the companion matrix (or any of its conjugates) of a projectively primitive polynomial $f$.
\end{remark}

The following result shows that one can in principle always construct a primitive polynomial from a projectively primitive one. 

\begin{theorem}
A polynomial $f\in \vF_q[x]$ is projectively primitive  if and only if  there exists $\lambda \in \vF_q^*$ such that $f(x/\lambda)$ is primitive.
\end{theorem}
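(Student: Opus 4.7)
\medskip
\noindent\textbf{Proof plan.} The idea is to translate both conditions into statements about a single root of $f$ in $\mathbb{F}_{q^m}$, and then analyze how multiplying that root by a scalar in $\mathbb{F}_q^*$ can promote ``projective primitivity'' to genuine primitivity. First, note that both conditions require $f$ to be irreducible of some degree $m$: the substitution $x\mapsto x/\lambda$ preserves the degree and is invertible, so $f$ is irreducible if and only if $f(x/\lambda)$ is. Fix a root $\alpha\in\mathbb{F}_{q^m}$ of $f$; then $\lambda\alpha$ is a root of $f(x/\lambda)$.

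\medskip
\noindent\textbf{The easy direction $(\Leftarrow)$.} If $f(x/\lambda)$ is primitive for some $\lambda\in\mathbb{F}_q^*$, then $\lambda\alpha$ generates $\mathbb{F}_{q^m}^*$. Since the natural projection $\mathbb{F}_{q^m}^*\to \mathbb{F}_{q^m}^*/\mathbb{F}_q^*$ sends a generator to a generator, and since $[\lambda\alpha]=[\alpha]$ in the quotient (as $\lambda\in\mathbb{F}_q^*$), the class $[\alpha]$ generates $G=\mathbb{F}_{q^m}^*/\mathbb{F}_q^*$, so $f$ is projectively primitive by Definition \ref{projectively_primitive_polynomial}.

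\medskip
\noindent\textbf{The interesting direction $(\Rightarrow)$.} Set $d=(q^m-1)/(q-1)$. Fix a generator $g$ of $\mathbb{F}_{q^m}^*$, and note that $\mathbb{F}_q^*=\langle g^d\rangle$. Write $\alpha=g^s$. Projective primitivity of $f$ says $[g]^s$ has order $d$ in $G$, which is equivalent to $\gcd(s,d)=1$. Every $\lambda\in\mathbb{F}_q^*$ can be written as $\lambda=g^{kd}$ for $k\in\mathbb{Z}/(q-1)\mathbb{Z}$, and the goal becomes: find $k$ such that $\gcd(s+kd,\,q^m-1)=1$, equivalently so that $\lambda\alpha=g^{s+kd}$ is a generator of $\mathbb{F}_{q^m}^*$.

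\medskip
\noindent\textbf{Finishing by CRT.} Factor $q^m-1=d(q-1)$ and consider a prime $p\mid q^m-1$. If $p\mid d$, then $s+kd\equiv s\pmod p$, and $\gcd(s,d)=1$ forces $p\nmid s+kd$ automatically. If $p\mid q-1$ and $p\nmid d$, then $d$ is a unit modulo $p$, so as $k$ varies the residue $s+kd\pmod p$ takes every value, and precisely one residue class modulo $p$ must be excluded. Letting $P'=\{p\text{ prime}:p\mid q-1,\ p\nmid d\}$ and applying the Chinese Remainder Theorem, the number of admissible $k$ modulo $q-1$ equals
\[
(q-1)\prod_{p\in P'}\!\left(1-\tfrac{1}{p}\right)\ >\ 0,
\]
which is strictly positive. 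Any such $k$ yields a $\lambda$ with $\lambda\alpha$ primitive, hence $f(x/\lambda)$ primitive.

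\medskip
\noindent The main obstacle is precisely this last step: one must check that the ``bad'' residues modulo the various primes dividing $q-1$ cannot collectively cover all of $\mathbb{F}_q^*$. The key observation that makes the counting work is that projective primitivity handles the primes dividing $d$ for free (via $\gcd(s,d)=1$), so the only constraints left are mod primes dividing $q-1$, where a single residue class is forbidden per prime -- leaving plenty of room by CRT.
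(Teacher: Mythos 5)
Your proof is correct, and it takes a genuinely different route from the paper's. The paper works ``downstairs'' in $\mathbb{F}_q^*$: it fixes a generator $g$ of $\mathbb{F}_q^*$, writes the norm $\alpha^N=g^e$ (with $e$ minimized over choices of $g$), proves $\gcd(N,e)=1$ by contradiction, and then invokes Dirichlet's theorem on primes in arithmetic progressions to find $\overline{s}$ with $P=\overline{s}N+e$ a prime exceeding $q-1$; then $g^P$ generates $\mathbb{F}_q^*$, which combined with $N\mid o(\lambda\alpha)$ gives $o(\lambda\alpha)=q^m-1$. You instead work ``upstairs'' in $\mathbb{F}_{q^m}^*$: you fix a generator $g$ of the big group, reformulate projective primitivity as $\gcd(s,d)=1$ where $\alpha=g^s$ and $d=(q^m-1)/(q-1)$, parametrize $\lambda=g^{kd}$, and count admissible $k$ modulo $q-1$ by a CRT sieve. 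Your approach has two advantages: it is entirely elementary (no Dirichlet), and it yields an explicit count
\[
(q-1)\prod_{\substack{p\mid q-1\\ p\nmid d}}\left(1-\tfrac{1}{p}\right)
\]
of the scalars $\lambda$ that work, which the paper's argument does not provide. The case split on primes $p\mid q^m-1$ is exhaustive since $q^m-1=d(q-1)$, and the observation that $\gcd(s,d)=1$ disposes of all primes dividing $d$ ``for free'' is exactly the right leverage; the remaining primes impose at most one forbidden residue class each, so CRT leaves a nonempty (indeed Euler-product-sized) set of choices. Both proofs are valid; yours is the more self-contained and more quantitative of the two.
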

\begin{proof}
If there exists $\lambda\in \vF_q^*$ such that $f(x/\lambda)$ is primitive, then it is obvious that $f$ is projectively primitive. Let us now show the other implication.
Let $\alpha$ be a root of $f$ in its splitting field $\vF_{q^{\deg(f)}}$. We have to find $\lambda$ such that $\lambda \alpha$ has order $q^{\deg(f)}-1$. Recall that for an element $\beta \in  \vF_{q^{\deg(f)}}^*$ we denote by $[\beta]$ its reduction in the quotient group $G=\vF_{q^{\deg(f)}}^*/\vF_q^*$.

First, observe that for any $\lambda\in \vF_q^*$, we have that $N=(q^{\deg(f)}-1)/(q-1)$ divides $o(\lambda\alpha)$ because $N=o([\alpha])=o([\lambda \alpha])$. So if we can find $\lambda\in \vF_q^*$ such that $(\lambda \alpha)^N$ has order $q-1$ we are done.

%Clearly, $\alpha^N=\mu\in \vF_q^*$ by definition of $G$. Now choose $g \in \vF_q^*$ such that $g$ is a generator for the group $\vF_q^*$,  $g^e=\mu$, and $e|q-1$.

Choose a multiplicative generator $g$ of $\vF_q^*$ and write $\alpha^N=\mu=g^e$ for some positive integer $e$. Moreover, assume that the choice of $g$ is also such that $e$ is minimal. First, observe that all the prime factors of $e$ divide $q-1$ as otherwise if $p$ is a prime factor of $e$ that does not divide $q-1$, one can rewrite $(g^p)^{e/p}=\mu$, and $g^p$ is again a generator for $\vF_q^*$, contradicting the minimality of $e$.

We now want to prove that $\gcd(N,e)=1$. Suppose the contrary and let $p$ be a prime factor of $\gcd(N,e)$. Consider $\gamma=\alpha^{N/p}$, if we show that $\gamma^{q-1}=1$ we get the contradiction by the definition of $N$ ($N$ is the smallest integer such that $\alpha^N\in \vF_q^*$). But this is obvious:
\[\gamma^{q-1}=\alpha^{N(q-1)/p}=\left(g^{e}\right)^{(q-1)/p}=\left(g^{e/p}\right)^{(q-1)}=1.\]

%and observe that $\gamma\notin\vF_q^*$ by the definition of $N$. But then the polynomial $X^p-\mu$ has $p+1$ roots: apart from $\gamma$, it also has $p$ roots in $\vF_q$ by the fact that $p|q-1$.

Since we want that $(\lambda \alpha)^N$ has order $q-1$, we have to select $\lambda$ such that $(\lambda \alpha)^N$ is a multiplicative generator of $\vF_q^*$. Write $\lambda=g^s$ for some $s\in \vN$, then we can write

\[(\lambda \alpha)^N=g^{sN}\alpha^N=g^{sN}\mu=g^{sN+e}.\]

Since $N$ and $e$ are coprime, Dirichlet Theorem on arithmetic progressions applies, therefore we can select $\overline s$ such that $P=\overline{s} N+e$ is a prime larger than $q-1$. The claim follows by observing that if $g$ is a generator for $\vF_q^*$, then $g^P$ is a generator of $\vF_q^*$.

\end{proof}

A direct consequence of the result above is that when $q$ is small, the problems of finding a primitive polynomial or a projectively primitive one are equivalent.

\begin{corollary}\label{cor:projprimconstruction}
Given a monic projectively primitive polynomial $f$ over $\vF_q$, constructing a primitive polynomial costs $O(q\log(q)\log(\deg(f))$ operations in $\vF_q$.
\end{corollary}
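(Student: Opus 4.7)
The plan is to apply the preceding theorem constructively: enumerate $\lambda\in\vF_q^*$, and for each one test whether the monic rescaling of $f(x/\lambda)$ is primitive. Since the theorem guarantees that at least one $\lambda$ succeeds, the cost analysis reduces to bounding the work done per $\lambda$ and multiplying by the size $q-1$ of the outer loop.

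The key reduction is to push the entire per-$\lambda$ test into $\vF_q$-arithmetic, avoiding any work in $\vF_{q^m}$ during the search. Set $m=\deg f$, $N=(q^m-1)/(q-1)$, and let $\alpha$ be a root of $f$. Then $f(x/\lambda)$ is primitive iff $\lambda\alpha$ generates $\vF_{q^m}^*$. By projective primitivity, $[\alpha]$ already has order $N$ in $\vF_{q^m}^*/\vF_q^*$, so the order of $\lambda\alpha$ equals $dN$ for some $d\mid q-1$, and primitivity is equivalent to $(\lambda\alpha)^N$ being a multiplicative generator of $\vF_q^*$. Two elementary facts now eliminate any $\vF_{q^m}$ computation from the per-$\lambda$ test: since $\lambda\in\vF_q^*$ and $N\equiv m\pmod{q-1}$, one has $\lambda^N=\lambda^m$ in $\vF_q^*$; and $\alpha^N=\N_{\vF_{q^m}/\vF_q}(\alpha)=(-1)^m f(0)$, which is immediate from the coefficients of $f$. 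Writing $\mu:=(-1)^m f(0)\in\vF_q^*$, the per-$\lambda$ test therefore reduces to: is $\lambda^m\mu$ a generator of $\vF_q^*$?

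The resulting procedure is: precompute once the prime factorisation of $q-1$ and the scalar $\mu$; then for each $\lambda\in\vF_q^*$ form $\eta=\lambda^m\mu$ by square-and-multiply in $O(\log m)$ multiplications in $\vF_q$, and verify generation by checking $\eta^{(q-1)/p}\neq 1$ for each prime $p\mid q-1$, at $O(\log q)$ operations per prime. Multiplying by $q-1$ yields the claimed $O(q\log q\log(\deg f))$, the cost of factoring $q-1$ (polynomial in $\log q$) and of the final rescaling of $f(x/\lambda)$ to monic form ($O(m)$ operations) being subdominant. The only delicate point is absorbing the factor $\omega(q-1)$, the number of distinct prime divisors of $q-1$, into the stated $\log q$; this is reasonable in the small-$q$ regime highlighted just before the corollary, where the factorisation of $q-1$ is tabulated and $\omega(q-1)=O(\log q)$ is a standard coarse bound.
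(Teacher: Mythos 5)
Your proposal is correct and follows essentially the same route as the paper's proof: both enumerate $\lambda\in\vF_q^*$, precompute the factorisation of $q-1$, and reduce the per-$\lambda$ primitivity test to checking whether $\lambda^{\deg f}\cdot \N_{\vF_{q^m}/\vF_q}(\alpha)$ generates $\vF_q^*$ (using $\lambda^{(q^m-1)/(q-1)}=\lambda^{\deg f}$ and the norm being read off from $f(0)$), so that no $\vF_{q^m}$-arithmetic is needed inside the loop. You are slightly more careful than the paper on two small points — you record the sign $(-1)^m$ in the norm formula, and you explicitly flag that the factor $\omega(q-1)$ in the per-$\lambda$ order check does not literally reduce to the stated $\log q\cdot\log(\deg f)$ bound without an extra (small-$q$) hypothesis, a subtlety the paper's proof passes over silently.
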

\begin{proof}
We first factor $q-1$ as a precomputation, which costs less than $O(\sqrt q)$.
Given a monic projectively primitive polynomial $f$ and one of its roots $\alpha\in \vF_{q^{ \deg(f)}}=\vF_q[x]/(f(x))$, we simply test (for any $\lambda$ in $\vF_q$) if $\beta_\lambda=(\lambda \alpha)^{\frac{q^{\deg(f)}-1}{q-1}}$ has order $q-1$. The cost is then as follows. Observe that the norm of $\alpha$ is given by the degree zero coefficient of $f$, so $\beta=N(\alpha)=\alpha^{\frac{q^{\deg(f)}-1}{q-1}}$ does not have to be computed.
Since $\beta$ lives in $\vF_q$, for any $\lambda\in \vF_q^*$, we check if $\lambda^{\frac{q^{\deg(f)}-1}{q-1}} \beta=\lambda^{\deg(f)}\beta=\beta_\lambda$  has order $q-1$ in $\vF_q^*$. To do that, we simply compute $\beta_\lambda^{(q-1)/r}$, where $r$ runs over all prime divisors of $q-1$, which are at most $O(\log(q))$. The total number of $\vF_q$-operations is then $O(q\log(q)\log(\deg(f)))$, where $O(\log(\deg(f)))$ is the cost of computing $\lambda^{\deg(f)}$.
\end{proof}

We recall now the definition of fractional jump index.

\begin{definition}
Let $\Psi$ be an automorphism of $\vP^n$.
Let $U=\{[X_0,X_1,\dots, X_{n-1},1]: \:\forall i\in \{0,\dots, n-1\} \: X_i\in \vF_q\}\subseteq \vP^n$ and $P\in U$. The \emph{fractional jump index of $\Psi$ at $P$} is
\begin{equation*}
\mathfrak{J}_{P,\Psi} = \min \{k \geq 1 \, : \, \Psi^k (P) \in U\}.
\end{equation*}
The \emph{absolute fractional jump index $\mathfrak{J}$ of $\Psi$} is the quantity
\begin{equation*}
\mathfrak{J}_\Psi = \max \{\mathfrak{J}_{P,\Psi} \, : \, P \in U\}.
\end{equation*}
\end{definition}

In \cite{bib:AGM17} it is shown that for a transitive projective map, the absolute jump index cannot be larger than $n+1$

\begin{proposition}[{\cite[Corollary 4.3]{bib:AGM17}}]
\label{prop:absolute_jump_above}
Let $\Psi\in \PGL_n(\vF_q)$ be transitive. The absolute jump index of $\mathfrak{J}_\Psi$ of $\Psi$ is less than or equal to $n+1$.
\end{proposition}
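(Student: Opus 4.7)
The plan is to translate the hypothesis $\Psi^k(P)\in H$ into a linear algebra statement about a lift of $P$ to $\vF_q^{n+1}$, and then extract a contradiction (or rather a forced return to $U$) from the Cayley--Hamilton identity. Write $\Psi=[M]$ for some $M\in\GL_{n+1}(\vF_q)$. A projective point $[v]$ lies in $H$ precisely when its last coordinate $v_{n+1}$ is zero, and in $U$ otherwise; in particular this condition is well-defined on equivalence classes because scaling $v$ by $\lambda\in\vF_q^*$ scales every coordinate.

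Given any $P\in U$, fix a representative $v\in\vF_q^{n+1}$ with $v_{n+1}\neq 0$. Suppose $\Psi^k(P)\in H$ for every $k\in\{1,\dots,n\}$ (otherwise $\mathfrak{J}_{P,\Psi}\leq n<n+1$ already). Then $M^k v\in V$ for $k=1,\dots,n$, where $V$ denotes the coordinate hyperplane $\{x\in\vF_q^{n+1}:x_{n+1}=0\}$. I would now apply the Cayley--Hamilton theorem: writing $\chi_M(x)=x^{n+1}+a_n x^n+\cdots+a_1 x+a_0$, we get
\[
M^{n+1}v=-a_n M^n v-\cdots-a_1 Mv - a_0 v.
\]
Taking the $(n+1)$-th coordinate and using $M^k v\in V$ for $1\leq k\leq n$, only the last term survives, so $(M^{n+1}v)_{n+1}=-a_0 v_{n+1}$.

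To conclude, observe that $a_0=(-1)^{n+1}\det(M)\neq 0$ because $M\in\GL_{n+1}(\vF_q)$, and $v_{n+1}\neq 0$ by choice of lift, so $(M^{n+1}v)_{n+1}\neq 0$, i.e.\ $\Psi^{n+1}(P)\in U$. Hence $\mathfrak{J}_{P,\Psi}\leq n+1$ for every $P\in U$, and taking the maximum gives $\mathfrak{J}_\Psi\leq n+1$.

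There is essentially no hard step: transitivity (equivalently, projective primitivity of $\chi_M$) is not really used beyond the invertibility of $M$, and the entire proof reduces to the observation that Cayley--Hamilton forces a nonzero $v$-contribution to $M^{n+1}v$ whose last coordinate does not get cancelled. The only point one should flag is that the argument is representative-independent, which is immediate from homogeneity of the coordinate function $v\mapsto v_{n+1}$.
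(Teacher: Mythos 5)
Your proof is correct. Note, however, that the paper does not actually prove this proposition: it is quoted verbatim from \cite[Corollary 4.3]{bib:AGM17} without proof. So there is no in-paper argument to compare against, only the companion proof of Theorem~\ref{prop:absolute_jump_below}, which establishes the reverse inequality $\mathfrak J_\Psi\geq n+1$ by a dimension-count in $\vP^n$ (each condition $\Psi^i(T)\subseteq H$ cuts by a hyperplane, so $\dim T\geq n-L$). Your Cayley--Hamilton argument is of a rather different flavor: it is purely linear-algebraic, works at the level of a lift $v\in\vF_q^{n+1}$, and has the pleasant feature that it requires only the invertibility of $M$ rather than transitivity. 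This is a genuine strengthening: the bound $\mathfrak J_\Psi\leq n+1$ holds for \emph{every} $\Psi\in\PGL_{n+1}(\vF_q)$, whereas transitivity is needed only for the equality in Theorem~\ref{prop:absolute_jump_below} (as the remark following it shows with the example $[X:Y]\mapsto[X+Y:Y]$). One natural geometric alternative would be to note that $v,Mv,\dots,M^n v$ span $\vF_q^{n+1}$ (since irreducibility of $\chi_M$ makes every nonzero $v$ a cyclic vector), so $Mv,\dots,M^{n+1}v$ cannot all lie in the hyperplane $\{x_{n+1}=0\}$; this is closer in spirit to the paper's dimension argument but does use irreducibility, which your computation avoids.

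One small slip worth flagging: the statement in the paper writes $\Psi\in\PGL_n(\vF_q)$, which should be $\PGL_{n+1}(\vF_q)$; you correctly use $M\in\GL_{n+1}(\vF_q)$ throughout.
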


We can actually prove a stronger result

%Let $((\psi,\{U_i\}_{i\in \{1,\dots, n+1\}})$ be the fractional jump of $\Psi$.
\begin{theorem}\label{prop:absolute_jump_below}
Let $\Psi\in \PGL_{n+1}(\vF_q)$ be transitive. Then $\mathfrak J_\Psi=n+1$.
\end{theorem}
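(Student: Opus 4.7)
The upper bound $\mathfrak{J}_\Psi \leq n+1$ is already Proposition~\ref{prop:absolute_jump_above}, so the plan is to exhibit a point $P \in U$ of jump index exactly $n+1$, i.e., such that $\Psi(P), \Psi^2(P), \ldots, \Psi^n(P)$ all lie in $H$. I would write $\Psi = [M]$ with $M \in \GL_{n+1}(\vF_q)$ and translate the condition projectively: for $y \in \vF_q^{n+1}\setminus\{0\}$ one has $[y]\in U$ iff $e_{n+1}^T y \neq 0$, and more generally $\Psi^k([y]) \in U$ iff $v_k^T y \neq 0$, where $v_k := (M^T)^k e_{n+1}$. Thus it suffices to produce a vector $y$ satisfying $v_0^T y \neq 0$ and $v_1^T y = \cdots = v_n^T y = 0$.

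Such a $y$ exists precisely when $v_0$ does not lie in $\langle v_1, \ldots, v_n\rangle$, equivalently when the $n+1$ vectors $v_0, v_1, \ldots, v_n$ are linearly independent. This is where the transitivity of $\Psi$ enters: by Theorem~\ref{transitivity_characterisation} the characteristic polynomial $\chi_M$ is projectively primitive, and in particular irreducible of degree $n+1$. Hence $\chi_M$ is also the minimal polynomial of $M^T$, and the annihilator of any nonzero vector $w$ under $M^T$ is either $\vF_q[x]$ or the maximal ideal $(\chi_M)$. Applied to $w = e_{n+1}$, this forces the minimal polynomial of $e_{n+1}$ with respect to $M^T$ to equal $\chi_M$, which gives exactly the needed linear independence of $v_0, \ldots, v_n$.

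To conclude, let $W = \langle v_1, \ldots, v_n\rangle$, a subspace of dimension $n$, and let $L = W^{\perp}$ with respect to the standard bilinear form on $\vF_q^{n+1}$; then $\dim L = 1$. The linear map $L \to \vF_q$, $y \mapsto v_0^T y$ is either zero or an isomorphism, and it is zero iff $v_0 \in L^{\perp} = W$. Since $v_0\notin W$ by the previous paragraph, there is $y\in L$ with $v_0^T y\neq 0$. The projective point $P=[y]$ then lies in $U$ while $\Psi^k(P)\in H$ for $k=1,\ldots,n$, so $\mathfrak{J}_{P,\Psi}\geq n+1$, and combined with Proposition~\ref{prop:absolute_jump_above} this yields $\mathfrak{J}_\Psi = n+1$. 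The main conceptual step is the translation of the projective statement into a cyclicity property of $e_{n+1}$ under $M^T$; once this is in place, irreducibility of $\chi_M$ (a consequence of transitivity that is even weaker than projective primitivity) closes the argument.
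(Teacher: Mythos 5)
Your proof is correct, and it follows a genuinely different route from the paper's. The paper's argument is projective and implicit: it takes $L$ to be the length of the longest excursion any point makes in $H$, considers the projective subspace
\[
T=\{P\in \vP^n:\ \Psi^i(P)\in H\ \forall i\in\{1,\dots,L\}\},
\]
shows $\dim T=0$ by a contradiction with the maximality of $L$ (if $\dim T\geq 1$ then $\Psi^{L+1}(T)$, having the same dimension, must meet the hyperplane $H$, producing an excursion of length $L+1$), and then uses the elementary bound $\dim T\geq n-L$ to force $L\geq n$. You instead work in the dual picture, with the row vectors $v_k=(M^T)^k e_{n+1}$, and make the algebraic mechanism explicit: transitivity $\Rightarrow$ $\chi_M$ projectively primitive $\Rightarrow$ $\chi_M$ irreducible $\Rightarrow$ $e_{n+1}$ is a cyclic vector for $M^T$ $\Rightarrow$ $v_0,\dots,v_n$ are linearly independent, after which the existence of a suitable $y$ in the one-dimensional space $\langle v_1,\dots,v_n\rangle^{\perp}$ is immediate. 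The two proofs are secretly the same (the paper's $T$ is exactly the projectivisation of $\langle v_1,\dots,v_L\rangle^{\perp}$, and $\dim T = 0$ is your linear independence), but yours has the virtue of isolating precisely which hypothesis drives the lower bound --- irreducibility of $\chi_M$, a strictly weaker property than transitivity --- whereas the paper uses transitivity opaquely, to guarantee that the extremal point $\overline P$ can be taken in $U$. One small imprecision: ``$v_0\notin\langle v_1,\dots,v_n\rangle$, equivalently $v_0,\dots,v_n$ linearly independent'' is not a biconditional in general (it would require $v_1,\dots,v_n$ to be independent a priori), but since you go on to prove the stronger statement that all $n+1$ vectors are independent, this does not affect the argument.
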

\begin{proof}
The direction $\mathfrak J_\Psi\leq n+1$ is given by Proposition \ref{prop:absolute_jump_above}. Let us show that $\mathfrak J_\Psi\geq n+1$. Recall  that $H= \{[X_0: \ldots: X_n] \in \vP^n \, : \, X_n = 0\}\cong \vP^{n-1}.$
Let $L$ be the largest integer such that there exists a point $\overline P\in \vP^n$ such that \[\{\Psi(\overline P), \Psi^2(\overline P), \dots \Psi^L(\overline P)\}\subseteq H,\]
so that $\mathfrak J_\psi=L+1$.
Observe that we can always choose $\overline P$ in $U$ because $\Psi$ is transitive: in fact, consider the smallest $\ell$ such that $P'=\Psi^{-\ell} (\overline P)\in U$ (this is possible as $\Psi$ is transitive). Then \[\{\Psi(P'), \Psi^2(P'), \dots \Psi^{L+\ell}(P')\}\subseteq H.\]
This forces $\ell=0$ and therefore $\overline P\in U$.

Set 
\[T=\{P\in \vP^n: \Psi^i(P)\in H \quad \forall i \in \{1,\dots, L\} \}.\]
It is easy to see that $T$ is non-empty by the choice of $L$, and is a projective subspace of $\vP^n$ that intersects $U$, because $\overline P \in U$. We want to show that the dimension of $T$ is zero, so it consists only of one point. 
Consider $\Psi^{L+1}(T)$ (that has the same dimension of $T$) and assume by contradiction that its dimension is greater than or equal to $1$. Then its intersection with $H$ is non-empty as $H$ is a projective hyperplane, so let $Q\in \Psi^{L+1}(T)\cap H$. Set $R=\Psi^{-L-1}(Q)$ and observe that $\Psi^i(R)\in H$ for any $i\in \{1,\dots, L\}$ as $R\in T$, but also $\Psi^{L+1}(R)\in H$ by construction, which is a contradiction by the maximality of $L$.
This forces $\dim \Psi^{L+1}(T)=\dim T=0$ which forces $T=\{\overline P\}$. Now, since $\dim T\geq n-L$ (each of the conditions $\Psi^i(T)\subseteq H$ imposes an equation), this forces $L\geq n$. Therefore $\mathfrak{J}_{\Psi}\geq n+1$.
\end{proof}

\begin{remark}
Transitivity is necessary for the result above to hold: consider for example the non transitive map of $\vP^1$ given by  $[X,Y]\mapsto [X+Y,Y]$. The absolute jump index is $1$ (no point at finite is mapped at infinite).
\end{remark}
%
%\begin{corollary}
%Let $\Psi$ be a transitive projective automorphism of $\vP^n$. Then $\mathfrak{J}_\Psi=n+1$.
%\end{corollary}
%\begin{proof}
%Simply combine Proposition \ref{prop:absolute_jump_above} with Proposition \ref{prop:absolute_jump_below}.
%\end{proof}

\subsection{Constructing a Transitive Fractional Jump}\label{subsec:constructingFjs}

The fractional jump of a projective map can be formally defined as follows
\begin{definition}
Let $U=\{[X_0,X_1,\dots, X_{n-1},1]: \:\forall i\in \{0,\dots, n-1\} \: X_i\in \vF_q\}\subseteq \vP^n$ and
\[\pi:\vA^n\longrightarrow U\]
\[(x_1,\dots, x_n)\mapsto [x_1,\dots,x_n,1]. \]
The \emph{fractional jump of $\Psi$} is the map
\[\psi : \vA^n \rightarrow \vA^n\]
\[x \mapsto \pi^{-1} \Psi^{\mathfrak{J}_{\pi(x)}} \pi (x).\]
\end{definition}

\begin{remark}
The fractional jump is clearly well-defined but its definition depends on the point where it is evaluated, which might be an issue if one wants to describe the map globally. Theorem \ref{explicit_form} ensures that this is not the case.
\end{remark}

Obviously, if one starts with a transitive projective automorphism one will get a transitive fractional jump. Interestingly enough,  the converse implication is also true, apart from two degenerate cases, see \cite[Theorem 2]{bib:AGM18} where this issue is settled. We report the result here for completeness

\begin{theorem}\label{thm:fundamental_description}
Let $\Psi$ be an automorphism of $\vP^n$ and let $\psi$ be its fractional jump. Then, $\Psi$ acts transitively on $\vP^n$ if and only if $\psi$ acts transitively on $\vA^n$, unless $q$ is prime and $n = 1$, or $q = 2$ and $n = 2$, with explicit examples in both cases.
\end{theorem}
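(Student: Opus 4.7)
The plan is to handle the easy direction by viewing $\psi$ as the first-return map of $\Psi$ to $U$, and the converse by showing---outside the two excluded pairs---that the characteristic polynomial $\chi_M$ of any lift $M$ of $\Psi$ is projectively primitive, which by Theorem~\ref{transitivity_characterisation} is equivalent to transitivity of $\Psi$ on $\vP^n$. For the forward direction, if $\Psi$ is transitive and $P\in U$, then the $\Psi$-orbit of $P$ enumerates $\vP^n$; the subsequence of its visits to $U$ is the $\psi$-orbit of $P$ and therefore enumerates $U$.

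For the converse, assume $\psi$ is transitive on $\vA^n\cong U$, fix $P\in U$, and observe that the $\Psi$-orbit of $P$ has length $m=\sum_{i=0}^{q^n-1}\mathfrak{J}_{\psi^i(P),\Psi}$ and already contains $U$. I would split on the factorisation of $\chi_M$. If $\chi_M$ has two or more distinct irreducible factors, the primary decomposition $\vF_q^{n+1}=\bigoplus_i W_i$ consists of proper $M$-invariant summands whose sum is $\vF_q^{n+1}$, so not all of them can lie in $H$; picking one $W_i$ that does not, $[W_i]\cap U$ is a nonempty proper $\psi$-invariant subset of $U$, contradicting transitivity. The same trick dispatches the non-cyclic sub-case of $\chi_M=p^e$. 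If $\chi_M$ is irreducible of degree $n+1$, then $M$ acts on $\vF_q^{n+1}\cong\vF_{q^{n+1}}$ as multiplication by a root $\alpha$ of $\chi_M$, so every $\Psi$-orbit on $\vP^n$ has length $o([\alpha])$; containing $U$ forces $o([\alpha])\ge q^n$, and since $o([\alpha])$ divides $|\vP^n|=q^n+q^{n-1}+\cdots+1<2q^n$, the order must equal $|\vP^n|$ and $\chi_M$ is projectively primitive.

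The main obstacle I anticipate is the cyclic case $\chi_M=p^e$ with $p$ irreducible of degree $d$ and $e\ge 2$, where the invariant flag $0\subsetneq\ker p(M)\subsetneq\cdots\subsetneq\ker p(M)^{e-1}\subsetneq\vF_q^{n+1}$ is forced to lie inside $H$. For $d\ge 2$ I would bound the projective order of $\Psi$ using the decomposition of $(\vF_q[x]/p^e)^*$ into its semisimple $\vF_{q^d}^*$-part and its unipotent $p_0$-part (with $p_0=\operatorname{char}\vF_q$); the bound comes out strictly below $q^n=|U|$, so no $\Psi$-orbit can cover $U$ and $\psi$ cannot be transitive. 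For $d=1$, however, $\ker p(M)^{e-1}$ has dimension $n$ and must equal $H$, so $\Psi$ genuinely preserves $H$ and $\psi$ becomes an affine map $x\mapsto Ax+b$ on $\vA^n$ with $A$ a single unipotent Jordan block of size $n$. Writing $N=A-I$ and applying Lucas' theorem to $\binom{p^r}{k}$ modulo $p=\operatorname{char}\vF_q$ compresses the $s$-th iterate (with $s=p^{\lceil\log_p n\rceil}$) to $\psi^s(x)=x+N^{s-1}b$, and a short case analysis shows that the order of $\psi$ on $\vF_q^n$ equals $q^n$ only when $q$ is prime and $n=1$, or $(q,n)=(2,2)$. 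In those two exceptional configurations one writes down the counterexamples by hand, for instance $[X{:}Y]\mapsto[X+Y{:}Y]$ for $q$ prime and $n=1$, whose fractional jump is the transitive translation $x\mapsto x+1$ on $\vF_q$.
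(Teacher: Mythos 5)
The paper does not actually prove this statement: it is quoted verbatim from \cite[Theorem~2]{bib:AGM18} with the remark ``see \ldots\ where this issue is settled,'' so there is no in-house argument to compare yours against. Judged on its own, your outline is a genuine and essentially complete proof.

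Your forward direction (first-return map of a transitive map is transitive on the return set) is standard and correct. For the converse, the main structural move---that any proper $M$-invariant subspace $W\not\subseteq H$ yields a nonempty, proper, $\psi$-invariant subset $[W]\cap U$ of $U$---is the right lever, and it correctly eliminates both the case of two or more distinct irreducible factors of $\chi_M$ and the non-cyclic $\chi_M=p^e$ case via a cyclic decomposition. The irreducible case is handled cleanly: $o([\alpha])\geq q^n$ together with $o([\alpha])\mid (q^{n+1}-1)/(q-1) < 2q^n$ forces $o([\alpha])=(q^{n+1}-1)/(q-1)$, since any proper divisor is at most half of it. In the cyclic $p^e$ case with $d=\deg p\geq 2$, $e\geq 2$, your order bound does close: $o(\Psi)\leq o(M)\leq (q^d-1)\,p_0^{\lceil\log_{p_0}e\rceil}$, and one checks $\lceil\log_{p_0}e\rceil\leq d(e-1)-1$ for $d\geq 2$, $e\geq 2$ (with equality only at $d=e=2$), whence $(q^d-1)p_0^{\lceil\log_{p_0}e\rceil}<q^{de-1}=q^n$---though you should spell out this inequality rather than gesture at it, since the naive estimate $p_0^{\lceil\log_{p_0}e\rceil}<p_0 e$ alone is not quite sharp enough at $d=e=2$. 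Finally, the $d=1$ analysis via the unipotent Jordan block, $\psi^s(x)=x+N^{s-1}b$ for $s=p_0^{\lceil\log_{p_0}n\rceil}$ (using Lucas), and the ensuing order comparison with $q^n$ correctly isolate exactly the two exceptional pairs $(q\text{ prime},n=1)$ and $(q,n)=(2,2)$, and you supply the required explicit example for the first.

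The only places that warrant a careful write-up before this could stand as a full proof are: (i) the explicit verification of the inequality $\lceil\log_{p_0}e\rceil\leq d(e-1)-1$ for $d\geq 2,\ e\geq 2$, and (ii) a brief justification that when $d=1$ the requirement that \emph{every} proper invariant subspace lie in $H$ forces $\ker N^n=H$, so that $\Psi$ stabilises $H$ and $\psi$ is the stated affine map with a size-$n$ unipotent Jordan block and $N^{n-1}b\neq 0$. Neither point is a conceptual gap---both follow from short computations---but they are load-bearing and should not be left implicit.
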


In \cite{bib:AGM17} an explicit global description of a fractional jump was given.

\begin{theorem}[{\cite[Section 5]{bib:AGM17} or \cite[Theorem 1]{bib:AGM18}}] \label{explicit_form}
Let $\Psi$ be a transitive automorphism of $\vP^n$, and let $\psi$ be its fractional jump. Then, for $i \in \{1, \ldots, n+1\}$ there exist
\[ a_1^{(i)}, \ldots, a_n^{(i)}, b^{(i)} \in \vF_q[x_1, \ldots, x_n] \]
of degree $1$ such that, if
\begin{align*}
U_1 &= \{x \in \vA^n \, : \, b^{(1)} (x) \neq 0\}, \\
U_i &= \{x \in \vA^n \, : \, b^{(i)} (x) \neq 0, \text{ and } b^{(j)} (x) = 0, \,
 \forall j \in \{1, \ldots, i-1\}\}, \\ &\text{for } i \in \{2, \ldots, n+1\}, \\
&\text{and} \\
f^{(i)} &= \bigg( \frac{a_1^{(i)}}{b^{(i)}}, \ldots, \frac{a_n^{(i)}}{b^{(i)}} \bigg),  \\
&\text{for } i \in \{1, \ldots, n+1\},
\end{align*}
then $\psi (x) = f^{(i)} (x)$ if $x \in U_i$. Moreover, the rational maps $f^{(i)}$ can be explicitly computed.
\end{theorem}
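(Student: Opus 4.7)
The plan is to realize the claimed rational maps as the entries of iterates of a matrix representative for $\Psi$. Fix $M \in \GL_{n+1}(\vF_q)$ with $\Psi = [M]$, and for each integer $j \geq 1$ define polynomials $a_1^{(j)}, \ldots, a_n^{(j)}, b^{(j)} \in \vF_q[x_1, \ldots, x_n]$ by
\[
\bigl(a_1^{(j)}(x), \ldots, a_n^{(j)}(x), b^{(j)}(x)\bigr)^T \;=\; M^{j} \cdot (x_1, \ldots, x_n, 1)^T.
\]
Each entry is a polynomial of degree at most $1$ in $x_1, \ldots, x_n$. Since $\pi(x) = [x_1 : \cdots : x_n : 1]$, we get $\Psi^j \pi(x) = [a_1^{(j)}(x) : \cdots : a_n^{(j)}(x) : b^{(j)}(x)]$, so $\Psi^j \pi(x) \in U$ if and only if $b^{(j)}(x) \neq 0$, and in that case
\[
\pi^{-1}\bigl(\Psi^j \pi(x)\bigr) \;=\; \bigl(a_1^{(j)}(x)/b^{(j)}(x), \ldots, a_n^{(j)}(x)/b^{(j)}(x)\bigr).
\]

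Next I would identify the sets $U_i$ in the statement with the level sets of $x \mapsto \mathfrak{J}_{\pi(x),\Psi}$. By definition, $\mathfrak{J}_{\pi(x),\Psi}$ is the least $j \geq 1$ with $\Psi^j \pi(x) \in U$, which by the previous paragraph is the least $j$ with $b^{(j)}(x) \neq 0$; this is exactly the condition defining $U_i$ with $i = \mathfrak{J}_{\pi(x),\Psi}$. Substituting into the definition of the fractional jump then yields $\psi(x) = f^{(i)}(x)$ for $x \in U_i$, which is the required local formula.

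It remains to show that $U_1 \cup \cdots \cup U_{n+1} = \vA^n$, which is the only place transitivity is used. By Proposition \ref{prop:absolute_jump_above} (or the sharper Theorem \ref{prop:absolute_jump_below}), $\mathfrak{J}_\Psi \leq n+1$, so for every $x \in \vA^n$ at least one of $b^{(1)}(x), \ldots, b^{(n+1)}(x)$ is nonzero. Together with the disjointness that is built into the defining conditions of the $U_i$, this gives a partition of $\vA^n$. Explicit computability of each $f^{(i)}$ is then immediate: compute $M, M^2, \ldots, M^{n+1}$ and read off the first $n$ entries and the last entry of each $M^j \cdot (x_1, \ldots, x_n, 1)^T$.

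There is no serious obstacle here beyond the bookkeeping; the only conceptual point to internalize is that the common denominator $b^{(j)}$ propagates through iteration as the last entry of $M^j \cdot (x_1, \ldots, x_n, 1)^T$, hence remains of degree at most $1$ in $x$, and that the bound $\mathfrak{J}_\Psi \leq n+1$ from transitivity is precisely what ensures that finitely many affine pieces $U_1, \ldots, U_{n+1}$ suffice to cover $\vA^n$.
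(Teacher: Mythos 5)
Your proof is correct and follows the same route as the cited references: realize $f^{(i)} = \DeHom([M^i])$ by letting the $a_h^{(i)}$ and $b^{(i)}$ be the entries of $M^i \cdot (x_1,\ldots,x_n,1)^T$, identify each $U_i$ with the level set $\{x : \mathfrak{J}_{\pi(x),\Psi} = i\}$, and invoke the bound $\mathfrak{J}_\Psi \leq n+1$ from transitivity to conclude that the $U_i$ partition $\vA^n$. This is exactly the framework the paper itself encodes via the $\DeHom$ operator and Algorithm \ref{algorithm:FJGA}, so there is no meaningful divergence from the intended argument.
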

\begin{remark}
Observe that the datum of a fractional jump $\psi$ is equivalent to the datum of the vector of degree $1$ polynomials
$(a^{(1)},\dots a^{(n+1)}; b^{(1)},\dots, b^{(n)})$
where $a^{(i)}=(a_1^{(i)},a_2^{(i)},\dots, a_n^{(i)})$.
\end{remark}

\section{Fractional Jumps in Practice}\label{sec:FJinPractice}
In this section we describe some aspects of the practical implementation of fractional jumps.

\subsection{Compact description}\label{subsec:compact}
In this section we give a compact description of a Fractional Jump.
We first need  an ancillary lemma
\begin{lemma}\label{lem:red_welldef}
Let $\Psi=[M]\in \PGL_{n+1}(\vF_q)$ be transitive. For $i\in \{1,\dots, n+1\}$, 
set  $f^{(i)}=\DeHom(M^i)$ and set $b^{(i)}$ to be the denominator of $f^{(i)}$.
Then $b^{(1)}\neq 0$ and for any $i\in \{2,\dots, n+1\}$ we have that  $b^{(i)}
\not\equiv 0 \mod b^{(1)},\dots, b^{(i-1)}$.
\end{lemma}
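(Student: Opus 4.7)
The plan is to translate the algebraic condition into a geometric one about a tower of projective subspaces. Writing $r_i \in \vF_q^{n+1}$ for the last row of $M^i$, the polynomial $b^{(i)}(x_1, \ldots, x_n) = r_i \cdot (x_1, \ldots, x_n, 1)^T$ is affine linear and satisfies $b^{(i)}(x) = 0$ exactly when $\Psi^i(\pi(x)) \in H$. Thus, if one sets $T_k = \bigcap_{j=1}^{k} \Psi^{-j}(H) \subseteq \vP^n$, the $\vF_q$-points of $V(b^{(1)}, \ldots, b^{(i-1)})$ correspond via $\pi$ to $T_{i-1} \cap U$. The first assertion $b^{(1)} \neq 0$ is immediate, since $M \in \GL_{n+1}(\vF_q)$ forces the last row to be non-zero. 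For the second, it is enough to exhibit, for each $i \in \{2, \ldots, n+1\}$, an $\vF_q$-rational $x$ at which $b^{(1)}, \ldots, b^{(i-1)}$ all vanish while $b^{(i)}$ does not: any element of the ideal $(b^{(1)}, \ldots, b^{(i-1)})$ is then zero at $x$, so $b^{(i)}$ cannot lie in it.

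The central step is a dimension count on the tower $T_0 \supseteq T_1 \supseteq \cdots$. Since each $\Psi^{-j}(H)$ is a projective hyperplane, $T_k = T_{k-1} \cap \Psi^{-k}(H)$ has dimension either $\dim T_{k-1}$ or $\dim T_{k-1} - 1$. By Theorem \ref{prop:absolute_jump_below} we have $\mathfrak J_\Psi = n+1$, and its proof actually establishes that $T_n$ consists of the single point $\overline P$ realising the maximum jump, so $\dim T_n = 0$. Since the dimension can fall by at most one at each of the $n$ steps from $\dim T_0 = n$ down to $\dim T_n = 0$, it must fall by exactly one at every step, giving $\dim T_k = n - k$ for $0 \leq k \leq n$. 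Moreover $\overline P \in T_k \cap U$ for every such $k$, so $T_k$ is not contained in $H$, and hence $T_k \cap U$ is an affine $\vF_q$-subspace of $U \cong \vA^n$ of dimension $n - k$.

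The conclusion is then a quick count. For $2 \leq i \leq n$, the affine $\vF_q$-subspace $T_{i-1} \cap U$ has dimension $n - i + 1$ and properly contains $T_i \cap U$ of dimension $n - i$, so their difference contains at least $q^{n-i}(q-1) > 0$ rational points. For $i = n+1$, one has $T_n \cap U = \{\overline P\}$ while $T_{n+1} = \emptyset$ (it is contained in $T_n = \{\overline P\}$ but $\Psi^{n+1}(\overline P) \in U$), so the witness is $\overline P$ itself. In either case the required $x$ exists, and hence $b^{(i)} \not\equiv 0 \mod b^{(1)}, \ldots, b^{(i-1)}$. The principal obstacle is the sharp dimension count of the $T_k$'s; it relies crucially on the equality $\mathfrak J_\Psi = n+1$ from Theorem \ref{prop:absolute_jump_below}, not merely on the inequality of Proposition \ref{prop:absolute_jump_above}.
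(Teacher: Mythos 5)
Your proof is correct, and it takes a genuinely different route from the paper's. The paper's argument is purely algebraic: assuming $j$ is minimal with $b^{(j)}\in(b^{(1)},\dots,b^{(j-1)})$, it extracts (by degree considerations) an $\vF_q$-linear relation $\sum_{k<j}\lambda_k b^{(k)}=b^{(j)}$, which says the last row of $N=\sum_{k<j}\lambda_k M^k-M^j$ vanishes; irreducibility of $\chi_M$ forces every nonzero element of $\vF_q[M]$ to be invertible, so $N=0$, giving a nonzero annihilating polynomial for $M$ of degree $j-1\le n<\deg\mu_M$ --- a contradiction. Your argument is geometric: you translate vanishing of $b^{(i)}$ into membership of $\Psi^i(\pi(x))$ in $H$, build the tower $T_k=\bigcap_{j\le k}\Psi^{-j}(H)$, use the sharp count $\dim T_k=n-k$ (coming from $\dim T_n=0$), and then produce an explicit $\vF_q$-rational witness $x\in (T_{i-1}\cap U)\setminus(T_i\cap U)$ at which $b^{(1)},\dots,b^{(i-1)}$ vanish while $b^{(i)}$ does not. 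Each route buys something: the paper's is self-contained and does not lean on Theorem \ref{prop:absolute_jump_below}, while yours is more transparent about where the hyperplane equations are cutting, and as a by-product recovers the exact block sizes $|U^{(i)}|=q^{n-i}(q-1)$ for $1\le i\le n$ and $|U^{(n+1)}|=1$, which the paper uses separately in the cost analysis of Theorem \ref{thm:compcostfrac}. One small caveat: you need the fact that $T_n$ is a single point, which (as you correctly note) comes from the \emph{proof} of Theorem \ref{prop:absolute_jump_below} and not merely from the bare statement $\mathfrak J_\Psi=n+1$; the statement alone only gives $T_n\cap U\neq\emptyset$ and $T_{n+1}\cap U=\emptyset$, which by themselves do not force $\dim T_n=0$. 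So be explicit that you are invoking the stronger fact established inside that proof.
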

\begin{proof}
First observe that since $\Psi$ is transitive we have that: 
\begin{itemize}
\item the characteristic polynomial of $M$ is irreducible and equal to the minimal polynomial  $\mu_M$
\item $b^{(1)}$ is different from 1, as otherwise no point at finite is mapped at infinity and therefore the map cannot be transitive on $\vP^n$.
\end{itemize}
Let $j$ be the smallest integer such that 
$b^{(j)}\equiv 0 \mod (b^{(1)},\dots, b^{(j-1)})$. Of course, we can assume $j\leq n+1$. By degree reasons, there
 exist $\lambda_1, \lambda_2, \dots, \lambda_{j-1}\in \vF_q$ such
 that $\left(\sum^{j-1}_{k=1}\lambda_k b^{(k)}\right)-b^{(j)}=0$. But this implies that the
matrix \[N=\left(\sum^{j-1}_{k=1}\lambda_k M^k\right)-M^j=M\left(\left(\sum^{j-1}_{k=1}\lambda_k M^{k-1}\right)-M^{j-1}\right)\] has the last row identically
 zero, so it is not invertible. But since the characteristic polynomial of $M$ is irreducible, any matrix in $\vF_q[M]\setminus \{0\}$ is invertible. This forces $N=0$. But then the polynomial $g=\left(\sum^{j-1}_{k=1}\lambda_k X^{k-1} \right)-X^{j-1}$ is zero at $M$ and therefore divisible by the minimal polynomial $\mu_M$. But since $j-1\leq n$ and $ \mu_M$ has degree $n+1$, we must have $g=0$, which is a contradiction because $g$ has degree $j-1$.
\end{proof}

We are now ready to provide a compact description of a fractional jump.

\begin{algorithm}
\caption{Fractional Jump Generation Algorithm}\label{algorithm:FJGA}
\begin{flushleft}
\textbf{Input}: a projectively primitive morphism $\Psi=[M]=\in \PGL_{n+1}(\vF_q)$\\ 
\textbf{Output}: the fractional jump of $\Psi$ 
%defined by $(\{U_i\}_{i\in \{1,\dots, n+1\}},\psi)$.
\end{flushleft}
\begin{algorithmic}[1]
\State $M^{(1)}\gets M$ \Comment{ $m^{(1)}_{h,k}$ is the $h$-th row, $k$-th column entry of the matrix  $M^{(1)}$. }
\For{$h \in \{1,\dots n\}$}
\State $a_h^{(1)}\gets m_{h,n+1}^{(1)}+\sum^{n}_{k=1}m^{(1)}_{h,k}x_{k}$
\EndFor
\State $b^{(1)} \gets m_{n+1,n+1}^{(1)}+\sum^{n}_{k=1}m^{(1)}_{n+1,k}x_{k}$

\State $a^{(1)}\gets (a_1^{(1)},\dots,  a_n^{(1)})$
\For{$i\in \{2,\dots n+1\} $}
\State $M^{(i)}\gets M^i$ \Comment{ $m^{(i)}_{h,k}$ is the $h$-th row, $k$-th column entry of the matrix  $M^{(i)}$. }
\State $b^{(i)} \gets m_{n+1,n+1}^{(i)}+\sum^{n}_{k=1}m^{(i)}_{n+1,k}x_{k}\mod b^{(1)},b^{(2)},\dots b^{(i-1)}$
\For{$h \in \{1,\dots n\}$}
\State $a_h^{(i)}\gets m_{h,n+1}^{(i)}+\sum^{n}_{k=1}m^{(i)}_{h,k}x_{k}\mod b^{(1)},b^{(2)},\dots b^{(i-1)}$
\EndFor
\State$a^{(i)}\gets (a_1^{(i)},\dots a_n^{(i)})$
\EndFor
\State \Return $(a^{(1)},a^{(2)}, \dots, a^{(n+1)}),(b^{(1)},b^{(2)}, \dots, b^{(n+1)})$
\end{algorithmic}

\end{algorithm}

\begin{theorem}
Storing a fractional jump requires at most $\ceil{\log(q)}(n+1)^2(n+2)/2$ bits.
\end{theorem}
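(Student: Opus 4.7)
The plan is to show that for each $i \in \{1, \dots, n+1\}$, each of the $n+1$ polynomials $b^{(i)}, a_1^{(i)}, \dots, a_n^{(i)}$ returned by Algorithm~\ref{algorithm:FJGA} can be encoded using at most $n-i+2$ elements of $\vF_q$. Summing then yields
\[
\sum_{i=1}^{n+1}(n+1)(n-i+2) \;=\; (n+1)\sum_{k=1}^{n+1} k \;=\; \frac{(n+1)^2(n+2)}{2}
\]
field coefficients in total, each of which fits in $\lceil \log q \rceil$ bits.

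To obtain the per-$i$ bound, I would first use Lemma~\ref{lem:red_welldef} to show that $b^{(1)},\dots,b^{(n+1)}$ are $\vF_q$-linearly independent inside the $(n+1)$-dimensional $\vF_q$-vector space $V$ of affine-linear polynomials in $\vF_q[x_1,\dots,x_n]$. Indeed, in any nontrivial dependence $\sum_{k=1}^m \lambda_k b^{(k)} = 0$ with $\lambda_m \neq 0$ and $m$ minimal, one could solve for $b^{(m)}$ as an element of the ideal $(b^{(1)},\dots,b^{(m-1)})$, contradicting Lemma~\ref{lem:red_welldef}.

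Consequently, for each $i \in \{2, \dots, n+1\}$, the subspace $W_i := \vF_q b^{(1)} + \dots + \vF_q b^{(i-1)} \subseteq V$ has dimension $i-1$ and is contained in the ideal $(b^{(1)},\dots,b^{(i-1)})$. Any affine-linear polynomial can therefore be replaced, modulo this ideal, by a representative obtained by subtracting a suitable element of $W_i$; the representative still lies in $V$ and can be picked inside a fixed complement of $W_i$, which has dimension $n+1-(i-1) = n-i+2$. Concretely, by performing Gaussian elimination on the $b^{(k)}$'s written in the monomial basis $1, x_1, \dots, x_n$ of $V$, one identifies $i-1$ pivot monomials; the normal form of any reduced polynomial is supported on the remaining $n-i+2$ non-pivot monomials and so is stored by $n-i+2$ coefficients in $\vF_q$. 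The case $i=1$ is included with $n-i+2 = n+1$, matching a generic affine-linear polynomial.

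The main subtlety is ensuring that the ``$\bmod\, b^{(1)}, \dots, b^{(i-1)}$'' step inside Algorithm~\ref{algorithm:FJGA} is interpreted in this affine-linear normal form rather than as generic polynomial division modulo an ideal, which in principle could leave higher-degree remainders. Once this is fixed, the algorithm keeps only the $n-i+2$ non-pivot coefficients of each $b^{(i)}$ and each $a_h^{(i)}$, and the final bit count follows from the summation displayed above.
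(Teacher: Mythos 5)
Your proof is correct and follows essentially the same route as the paper: reduce each $a_h^{(i)}$ and $b^{(i)}$ modulo the linear relations $b^{(1)}=\dots=b^{(i-1)}=0$ so that $(a^{(i)},b^{(i)})$ costs $(n+1)(n+2-i)$ field elements, then sum. You make explicit two points the paper leaves implicit — that Lemma~\ref{lem:red_welldef} gives genuine $\vF_q$-linear independence of the denominators $b^{(1)},\dots,b^{(n+1)}$ in the $(n+1)$-dimensional space of affine-linear forms, and that the ``$\bmod$'' step in Algorithm~\ref{algorithm:FJGA} must be read as Gaussian elimination to a normal form supported on non-pivot monomials rather than as generic polynomial division — which is a useful clarification but not a different argument.
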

\begin{proof}
Algorithm \ref{algorithm:FJGA} produces a fractional jump from a transitive projective automorphism. Now observe that the bit size of $(a^{(1)},b^{(1)})$ is the same as the bit size of $M$, which is $(n+1)^2\ceil{\log(q)}$. The bit size of $(a^{(2)},b^{(2)})$ is $(n+1)n\ceil{\log(q)}$ as we were able to use the relation $b^{(1)}=0$. More in general, the bit size of $(a^{(i)},b^{(i)})$ is $(n+1)(n+2-i)\ceil{\log(q)}$ as we can use the relation $b^{(1)}=b^{(2)}=\dots=b^{(i-1)}=0$. The process terminates and it is well defined because of Lemma \ref{lem:red_welldef}.
Adding everything up we get
\[\sum^{n+1}_{i=1} (n+1)(n+2-i)\ceil{\log(q)} =\ceil{\log(q)}(n+1)^2(n+2)/2.\]

\end{proof}

\subsection{Expected cost of evaluation} \label{subsec:eval}
Evaluating a fractional jump is a very easy task, as it involves only one inversion in the base field. In this section we compute the \emph{expected cost} of evaluating a fractional jump, essentially weighting the computational cost with the probability that a random point in $\vF_q^n$ is selected.

\begin{definition}\label{def:projprim}
Let $\psi$ be a map on $\vF_q^n$. We define the \emph{expected cost} of computing $\psi$ on $\vF_q^n$ to be
\[\mathbb E[\psi]=q^{-n}\sum_{x\in \vF_q^n} \Cost(\psi,x), \]
where $\Cost(\psi,x)$ denotes the number of binary operations needed to evaluate $\psi$ at $x$.
\end{definition}

We now compute the expected complexity of evaluating a fractional jump sequence in the large field regime, which is the one for which we have the nice discrepancy bounds in \cite[Section 8]{bib:AGM17}.

\begin{algorithm}
\caption{Fractional Jump Evaluation Algorithm}\label{algorithm:FJEA}
\begin{flushleft}
\textbf{Input}: a fractional jump  $\psi$ and a point $y\in \vF_q^n$.\\ 
\textbf{Output}: $\psi(y)$.
%defined by $(\{U_i\}_{i\in \{1,\dots, n+1\}},\psi)$.
\end{flushleft}
\begin{algorithmic}[1]
\For{$i\in \{1,\dots n+1\} $}
\If{$b^{(i)}(y)\neq 0$}
\State $c\gets b^{(i)}(y)^{-1}$
\State $v\gets a^{(i)}(y)$
\State \Return $cv$
\EndIf
\EndFor
\end{algorithmic}
\end{algorithm}

%\begin{table}
%\caption{sample table with header}
%\begin{tabular}{llll}
%first column & second column &
%third column & fourth coulmn\\ \midrule
%1 & 34 & 20  & 120 \\
%2 & 22 & 50  & 80 \\
%\end{tabular}
%\end{table}

\begin{theorem}\label{thm:compcostfrac}
Let $q$ be a prime, $\Psi=[M]\in \PGL_{n+1}(\vF_q)$ be a transitive projective automorphism, and $\psi$ be its fractional jump. Suppose that $[M]$ has a 
representative in 
$\GL_{n+1}(\vQ)$ having entries in $\{-1,0,1\}$. Suppose that $q\geq n^3$.
The expected cost of evaluating a fractional jump is $O((n+\log\log(q))\log(q)\log\log(q) \log \log \log (q)+ n^2 \log(q))$.
\end{theorem}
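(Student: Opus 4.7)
The plan is to evaluate $\psi(y)$ by iterating $M$ as a matrix--vector product directly, rather than by using the precomputed rational maps $f^{(i)}$ from Theorem~\ref{explicit_form} (whose coefficients will generally not lie in $\{-1,0,1\}$). I lift the hypothesised $\{-1,0,1\}$-entried representative of $M$ from $\GL_{n+1}(\vQ)$ to $\GL_{n+1}(\vF_q)$ coordinatewise. On input $y\in\vF_q^n$, I set $P_0=(y_1,\dots,y_n,1)$ and iteratively compute $P_i=MP_{i-1}$ for $i=1,2,\dots,n+1$, checking after each multiplication whether the last coordinate of $P_i$ is nonzero. Theorem~\ref{prop:absolute_jump_below} guarantees success for some $i\le n+1$; at the first such $i$ I dehomogenise via one inversion and $n$ multiplications in $\vF_q$. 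The critical gain is that each matrix application uses only $O(n^2)$ additions and subtractions in $\vF_q$ and no $\vF_q$-multiplications, giving $O(n^2\log q)$ bit operations per iteration.

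To control the expected number of iterations, I use Theorem~\ref{explicit_form} to identify the event $\mathfrak{J}_{\pi(y),\Psi}=i$ with $y\in U_i$. By Lemma~\ref{lem:red_welldef}, the affine forms $b^{(1)},\dots,b^{(n+1)}$ are successively $\vF_q$-linearly independent, so the nested level sets $V_j=\{y:b^{(1)}(y)=\cdots=b^{(j)}(y)=0\}$ each shrink by a factor of at least $q$; this yields $|U_i|\le q^{n-i+1}$ and hence $\Pr_y[\mathfrak{J}_{\pi(y),\Psi}=i]\le q^{-(i-1)}$. Since $q\ge n^3$, the series $\sum_{i=1}^{n+1}i\,q^{-(i-1)}$ is $O(1)$, so the expected cost of the matrix-iteration phase is $O(n^2\log q)$.

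The final dehomogenisation step, performed exactly once, consists of one inversion and $n$ multiplications in $\vF_q$. Using FFT-based integer arithmetic, one $\vF_q$-multiplication costs $M(\log q)=O(\log q\,\log\log q\,\log\log\log q)$ bit operations, and one inversion via the fast extended Euclidean algorithm costs $O(M(\log q)\log\log q)=O(\log q\,(\log\log q)^2\,\log\log\log q)$. Adding these to the $O(n^2\log q)$ matrix-iteration cost yields the stated bound $O\bigl((n+\log\log q)\log q\log\log q\log\log\log q+n^2\log q\bigr)$.

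The main obstacle I anticipate is the uniform cardinality bound $|U_i|\le q^{n-i+1}$: Lemma~\ref{lem:red_welldef} is phrased as an algebraic non-vanishing statement about $b^{(i)}$ modulo the previous denominators, and one must carefully convert this into the geometric dimension drop at each step (a nonzero affine form on an affine subvariety of size $q^d$ vanishes on at most $q^{d-1}$ points). Once this is in place the rest is standard complexity bookkeeping for $\vF_q$-arithmetic.
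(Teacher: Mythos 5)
Your proposal is correct but takes a genuinely different algorithmic route. The paper analyses Algorithm~\ref{algorithm:FJEA}, which evaluates the precomputed rational maps $f^{(i)}$ from Theorem~\ref{explicit_form}: on $U^{(1)}$ the coefficients are those of $M$, hence in $\{-1,0,1\}$, giving cost $O(I(q)+nM(q)+n^2S(q))$, while on $\bigcup_{i\geq 2}U^{(i)}$ (total size $q^{n-1}$) the coefficients $m^{(i)}_{h,k}$ of $M^i$ are arbitrary and each evaluation costs $O(n^2)$ field multiplications; the paper then only needs the coarse split $|U^{(1)}|=q^n-q^{n-1}$ versus the rest, since the expensive case is down-weighted by $1/q$ and killed by $q\geq n^3$. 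You instead repeatedly apply $M$ as a $\{-1,0,1\}$-matrix to the homogeneous coordinate vector, so every step is pure addition/subtraction ($O(n^2\log q)$ bits) and the number of steps is $\mathfrak{J}_{\pi(y),\Psi}$; this forces you to prove the finer bound $|U_i|\leq q^{n-i+1}$ so that the expected number of steps is $O(1)$. That bound does follow from Lemma~\ref{lem:red_welldef}: when $V_{i-1}\neq\emptyset$, the affine forms vanishing identically on $V_{i-1}$ are exactly the $\vF_q$-span of $b^{(1)},\dots,b^{(i-1)}$, so the lemma guarantees $b^{(i)}|_{V_{i-1}}$ is not identically zero, whence $|V_i|\leq q^{-1}|V_{i-1}|$ if the restriction is nonconstant, or $V_i=\emptyset$ if it is a nonzero constant. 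Your version never touches the large coefficients $m^{(i)}_{h,k}$ and in fact does not use $q\geq n^3$ at all (the geometric series $\sum_i iq^{-(i-1)}$ is already $O(1)$ for any $q\geq 2$); the paper's version reduces the maps once offline via Algorithm~\ref{algorithm:FJGA} and then performs exactly one rational-map evaluation per call, rather than a variable number of matrix iterations, paying the multiplication penalty only with probability $1/q$.
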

\begin{proof}

We want to estimate the average cost of Algorithm \ref{algorithm:FJEA}.
As usual, set $U^{(1)}=\{x\in \vA^n(\vF_q):  b^{(1)}(x)\neq 0\}$ 
and for $i\in\{2,\dots n+1 \}$ set
\[U^{(i)}=\{x\in \vA^n(\vF_q):  b^{(i)}(x)\neq 0, \text{and } b^{(1)}(x)=b^{(2)}(x)=\dots  =b^{(i-1)}(x)=0\},\] and 
\[\mathbb E[\psi]=q^{-n}\sum^{n+1}_{i=1}\sum_{x\in U^{(i)}} \Cost(\psi,x).\]
For $x \in U^{(1)}$,  by the fact that $M$ has small coefficients, evaluating $a^{(1)}$ and $b^{(1)}$ involves at most $O(n^2)$ sums. Therefore, we have that $\Cost(\psi,x)=O(I(q)+nM(q)+ n^2S(q))$, where $I(q)$ is the cost of an inversion, $M(q)$ is the cost of a multiplication in $\vF_q$ and $S(q)$ the cost of an addition in $\vF_q$. 
For $x\in U^{(i)}$ and $i\geq 2$,  evaluating $a^{(i)}$ and $b^{(i)}$ becomes more expensive, as it might involve also $n-1$ multiplications by elements of $\vF_q$ for each component (the coefficients $m^{(i)}_{h,k}$). The final cost of evaluating at $x\in U^{(i)}$ is then 
$\Cost(\psi,x)=O(I(q)+(n+n^2)M(q)+ n^2S(q))$.
Since there are $q^n-q^{n-1}$ elements in $U^{(1)}$ and $q^{n-1}$ in the union of the rest of the $U^{(i)}$'s we have that 
\[\mathbb E[\psi]=O\left(I(q)+nM(q)+ n^2S(q)+\frac{I(q)+(n+n^2)M(q)+ n^2S(q)}{q}\right).\]

Since $q>n^3$ and $I(q),M(q),S(q)$ are all polynomial time operations in $\log(q)$, we have that $\frac{I(q)+(n+n^2)M(q)+ n^2S(q)}{q}=O(1)$ and then
\[\mathbb E[\psi]=O\left(I(q)+nM(q)+ n^2S(q)\right).\] 
Observe that if one uses Fast Fourier transform for multiplication \cite{bib:SchStr71} and Sch\"onhage Algorithm for inversions \cite[Remark 11.1.99]{mullen2013handbook} we have that \[I(q)=M(q)\log\log(q)\] and \[M(q)=\log(q)\log\log(q) \log \log \log (q).\] Adding two integers modulo $q$ simply costs $O(\log(q))$, from which we get the final claim.
\end{proof}

\begin{example}
Fix for example $p=38685626227668133590597803$ and $f=x^3-x-1\in \vF_p[x]$. One can check with a computer algebra system (for example SAGE \cite{bib:sagemath}) that $(p^3-1)/(p-1)$ is a prime number and that $f$ is an irreducible polynomial. It follows directly from Definition \ref{def:projprim} that $f$ is projectively primitive and therefore the projective map produced by its companion matrix  (see Remark \ref{rem:finding_transitive}) verifies the hypothesis of Theorem \ref{transitivity_characterisation}, and thus it generates a transitive fractional jump verifying the hypothesis of Theorem \ref{thm:compcostfrac}. Computationally it is very easy to produce projectively primitive polynomials, but it would also be interesting to give a systematic way to construct them (such as the one using Artin-Schreier jumps in \cite{bib:AGM18}).

\end{example}

\begin{remark}
In terms of expected complexity (and whenever the coefficients are carefully chosen) fractional jumps behave better than ICGs, as we are about to explain. In fact, let us now compare the result of Theorem \ref{thm:compcostfrac} for $n>1$ with $n=1$ which is essentially the case of the ICG (see \cite[Example 2.4]{bib:AGM17}). Evaluating an ICG having small coefficients costs one inversion $O(I(q))$ whether evaluating a Fractional Jump with small coefficients costs averagely $O(I(q)+nM(q)+n^2S(q))$. Notice now that if $q$ is a large prime and $n$ is relatively small we have that $I(q)+nM(q)+n^2S(q)\sim I(q)$. On the other hand, an ICG only generates one pseudorandom point at each iteration, whether instead the Fractional Jump construction generates $n$-pseudorandom points.
\end{remark}

\subsection{Compound Generator for Fractional Jumps}\label{subsec:compound}

In this subsection we show that the compound generator construction for the Inversive Congruential Generator easily extends to a fractional jump and provide an example.

\begin{theorem}
Let $\ell$ and $n$ be positive integers and  $\{p_1,p_2,\dots, p_\ell\}$ be $\ell$ distinct primes. 
For any $i\in \{1,\dots \ell\}$, let $\Psi_i$ be a transitive projective automorphism of 
$\vP^n(\vF_{p_i})$ and  
$\psi_i:\vA^n(\vF_{p_i})\longrightarrow \vA^n(\vF_{p_i})$ be its fractional jump.

Let $N=p_1\cdots p_\ell$ and  $R=\vZ/N\vZ$. There exists 
a transitive map $\psi$ on $R^n$ such that, for any $i\in \{1,\dots n\}$, its reduction modulo $p_i$ is $\psi_i$.
\end{theorem}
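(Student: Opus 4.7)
The plan is to apply the Chinese Remainder Theorem componentwise. Since the $p_i$ are distinct primes, CRT provides a ring isomorphism $R = \vZ/N\vZ \cong \prod_{i=1}^{\ell} \vF_{p_i}$, which induces a bijection of sets $R^n \cong \prod_{i=1}^{\ell} \vF_{p_i}^n$. Under this identification I would define $\psi$ to be the product permutation $(\psi_1, \dots, \psi_\ell)$ acting by $\psi_i$ on the $i$-th factor, and then transport it back to a map on $R^n$. The compatibility with reduction is then immediate from the construction: projecting $R^n \to \vF_{p_i}^n$ and applying $\psi_i$ gives the same result as first applying $\psi$ and then projecting, so the reduction of $\psi$ modulo $p_i$ is exactly $\psi_i$.

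The substantive step is the verification of transitivity. By Theorem \ref{thm:fundamental_description}, the transitivity of $\Psi_i$ on $\vP^n(\vF_{p_i})$ implies that $\psi_i$ acts transitively on $\vA^n(\vF_{p_i})$, so $\psi_i$ is a single cycle of length $p_i^n$ in the symmetric group of $\vF_{p_i}^n$. For any starting point $(x_1, \dots, x_\ell) \in \prod_i \vF_{p_i}^n$, the $\psi$-orbit is $\{(\psi_1^k(x_1), \dots, \psi_\ell^k(x_\ell)) : k \geq 0\}$, whose length equals $\operatorname{lcm}(p_1^n, \dots, p_\ell^n)$. Because the $p_i$ are pairwise coprime, this lcm equals $\prod_{i=1}^{\ell} p_i^n = N^n = |R^n|$, so the orbit of any point is the full set and $\psi$ is transitive.

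The main (and quite mild) obstacle is precisely this coprimality-of-orders observation: once one sees that the individual cycle lengths $p_i^n$ are pairwise coprime, transitivity of the product drops out. A caveat worth flagging is that Theorem \ref{thm:fundamental_description} has two small exceptional pairs $(q,n)$ in which transitivity of $\Psi_i$ does not force transitivity of $\psi_i$; the plan goes through whenever each $\psi_i$ is in fact transitive on $\vA^n(\vF_{p_i})$, which is the content needed for the argument above.
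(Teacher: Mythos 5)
Your proof is correct and follows essentially the same approach as the paper: decompose $R^n \cong \prod_i \vF_{p_i}^n$ via CRT, define $\psi$ as the product of the $\psi_i$, and deduce transitivity from the pairwise coprimality of the cycle lengths $p_i^n$. The paper writes this out more constructively (building the CRT idempotents $u_i$ explicitly and defining $\psi(x)=\sum_i u_i\,\overline\psi_i(x)$, then verifying bijectivity and transitivity via a commutative diagram), which is handy for the worked example that follows, but the underlying argument is identical to yours. One small note: your caveat about the exceptional cases of Theorem \ref{thm:fundamental_description} is unnecessary here, since (as the paper remarks just before that theorem) the implication you actually use --- $\Psi_i$ transitive on $\vP^n$ implies $\psi_i$ transitive on $\vA^n$ --- holds unconditionally; the exceptions only affect the converse direction.
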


\begin{proof}
Let 
\[v_i=\prod^\ell_{\substack{j=1\\j\neq i}}p_j\]
and $r_i$ be a representative modulo $N$ of the inverse of $v_i$ modulo $p_i$.
Set $u_i=v_i r_i$
and $L_i$ the map which takes as input an element of $\vF_{p_i}^n$ and outputs its canonical representative in $\{0,\dots p_i-1\}^n\subseteq R^n$.
Consider the map
\begin{align*}
\psi:R^n & \longrightarrow R^n\\
x & \mapsto \sum^{\ell}_{i=1}u_i\overline \psi_i(x)
\end{align*}
where \[\overline \psi_i(x)= L_{i}(\psi_i(x\mod p_i)).\]
First observe that $\psi$ is well defined, as it is a sum of well defined maps.
We have now to prove that $\psi$ is a bijection. To see this, notice that we have the following diagram
\[\begin{CD}
R^n @>\psi>> R^n\\
@V\pi_iVV    @V\pi_iVV\\
\vF_{p_i}^n @>\psi_i>>   \vF_{p_i}^n
\end{CD}\]
where $\pi_i$ is the natural reduction of $R^n$ modulo $p_i$.
The diagram is commutative thanks to the choice of $u_i$, which is zero modulo $p_j$ for any $j\neq i$, and modulo $p_i$ is equal to $1$.
We want to prove first that $\psi$ is surjective. Let $z\in R^n$ and consider $z_i=\pi_i(z)$. Since  $\psi_i$ is bijective, there exists $x_i\in \vF_{p_i}^n$ such that $\psi_i(x_i)=z_i$. By the Chinese Remainder Theorem we can find $x\in R^n$ such that $x\equiv x_i \mod p_i$  for all $i\in \{1,\dots,\ell\}$. It is now immediate to see that $\psi(x)=z$. So $\psi$ is surjective and therefore bijective as $R^n$ is a finite set.

We have now to show that $\psi$ is transitive. To see this, we will show that the order of an element $\overline x\in R^n$ is zero modulo $p_i^n$ for any $i\in \{1,\dots, \ell\}$, so the claim will follow as the order of $\psi$ at $\overline x$ is at most $N^n$. Suppose that $d$ is a positive integer such that $\psi^d(\overline x)=\overline x$, then applying $\pi_i$ on both sides and using the commutativity of the diagram we have that \[\pi_i(\psi^d(\overline x))=\psi_i^d(\pi_i(\overline x))=\pi_i(\overline x),\]
from which it follows that $d$ must be divisible by $p_i^n$ as $\psi_i$ is transitive.
\end{proof}

\begin{remark}
Notice that also other lifts $L_i$ to $R^n$ would be suitable for the compound generator, not only the canonical one $\vF_{p_i}^n\rightarrow \{0,1,\dots, p_i-1\}^n\subseteq R^n$.
\end{remark}
\begin{example}
To fix the ideas for our constructions, we produce here a small toy example for $R=\vZ/15\vZ$ and $n=2$.
Let us construct first a transitive projective map over $\vP^2(\vF_5)$. For this, consider the polynomial $x^3+3x+3\in \vF_5[x]$ and its companion matrix
\[M=
\begin{pmatrix}
0 & 0 & 3\\
-1 & 0 & 3\\
0 & -1 & 0
\end{pmatrix}.
\]
To compute the fractional jump of $\Psi_1=[M]\in \PGL_{3}(\vF_5)$ we also need the matrices $M^2$ and $M^3$:
\[
M^2=
\begin{pmatrix}
0 & -3 & 0\\
0  &-3 &-3\\
1 &  0 &-3
\end{pmatrix}
\quad
M^3=
\begin{pmatrix}
3 & 0 & 1\\
3  & 3 & 1\\
0 &  3 & 3
\end{pmatrix}.
\]
The fractional jump of $[M]$ is then

\[
\psi_1(x_1,x_2)=\begin{cases}
(\frac{2}{x_2}, \frac{x_1-3}{x_2}) & \text{if $x_2\neq 0$}\\
(0,\frac{2}{x_1+2}) & \text{if $x_2= 0 $ and $x_1\neq 3$ }\\
(0,0) & \text{if $x=(3,0)$} 
\end{cases}
\]

We now need a projectively primitive polynomial of degree $3$ over $\vF_3$. We select $x^3+2x+1\in \vF_3[x]$. Its companion matrix is 
\[M=
\begin{pmatrix}
0 & 0 & 1\\
-1 & 0 & 2\\
0 & -1 & 0
\end{pmatrix}.
\]
Analogously, one computes the fractional jump of $\Psi_2=[M]\in \PGL_{3}(\vF_3)$ obtaining

\[
\psi_2(x_1,x_2)=\begin{cases}
(-\frac{1}{x_2}, \frac{x_1-2}{x_2}) & \text{if $x_2\neq 0$}\\
(0,-\frac{1}{x_1+1}) & \text{if $x_2= 0 $ and $x_1\neq 2$ }\\
(0,0) & \text{if $x=(2,0)$} 
\end{cases}
\]

The compound generator of $\psi_1$ and $\psi_2$ is then 
\[\psi: R^2 \longrightarrow R^2\]
\[ \psi(x_1,x_2)= 6\cdot L_1(\psi_1(x_1\mod 5, x_2 \mod 5))+10\cdot L_2(\psi_2(x_1\mod 3, x_2 \mod 3))\]
where $L_1$ (resp. $L_2$) is the obvious map lifting $\vF_5$ (resp. $\vF_3$) to $\{0,1,2,3,4\}$ (resp. $\{0,1,2\}$) in $\vZ/15\vZ$. One can check directly that $\psi$ is in fact transitive on $R^2$.
\end{example}

\section{Some ideas to achieve unpredictability from a fractional jump sequence}\label{sec:unpredictability}

Since we already have nice (provable) distributional properties of FJs given by the results in \cite{bib:AGM17} (which make Fractional Jumps suitable for Monte Carlo methods for example), in this section we would like to provide some modifications of the fractional jump construction that could be of use for pseudorandom number generation in settings where unpredictability is a critical property (such as cryptography). 
In this setting we have an opponent observing the stream of pseudorandom numbers and he must not be able to reconstruct the generator, or predict next values of the stream.

\begin{remark}\label{rem:reconstruction} 
We would like to observe that the main issue we encounter when we want to use the basic fractional jump construction for pseudorandom number generation in a cryptographic setting is the following: when the base field $\vF_q$ is large, on most of the points of $\vF_q^n$ we act as $n$ rational functions in $n$ variables of degree $1$ (more precisely in the notation of Theorem \ref{thm:fundamental_description} we act as $f^{(1)}$ on all points of $U_1$, which are $q^n-q^{n-1}$. Therefore, for each pseudorandom number we observed, we get a system of linear equations in the coefficients of the rational functions defining $f^{(1)}$. It is therefore expected that in $(n+1)^2$ points we can reconstruct $f^{(1)}$ by solving a linear system (assuming that all the points in the iteration lie all in $U_1$, which is a reasonable assumption as it has size comparable with $q^n$).
\end{remark}
In what follows we describe some constructions which seem to avoid the issue presented in the remark above.

\subsection{Secret prime $q$}\label{sec:secretprimes}
Here we follow the ideas of \cite{anshel1997zeta}. Choose two large odd primes $p,q$ with the property that $p<q$ and $q=kp+2$ if $p\neq 2$ is odd. The designer keeps $q$ secret, constructs a secret full orbit fractional jump $\psi:\vF_q^n\longrightarrow \vF_q^n$, and chooses a secret starting point $u_0\in\vF_q^n$. Consider now the canonical lift $L:\vF_q^{n}\longrightarrow \{0,1,\dots, q-1\}^n$. The pseudorandom sequence is then produced as $L( \psi^m(u_0) ) \mod p$. To avoid the small biases given by the reduction one can use rejection sampling by skipping elements of the sequence $\psi^m(u_0) \mod q$ that have components that are congruent to $q-2$ or $q-1$ modulo $q$. Of course, $p$ should be chosen relatively small compared with $q$.
%\begin{color}{red}How small should $p$ be chosen?
%\end{color}

\subsection{Forcing jumps}\label{sec:forcing}

Let $\psi:\vF_q\longrightarrow \vF_q$ be a fractional jump, $T$ be a subset of 
$\vF_q^n$ roughly of size $(q^n-1)/2$, $T^c$ be its complement.
Define the map
\[\phi(x)=\begin{cases}
\psi(x) \quad \text{if $x\in T$}\\
\psi(\psi(x)) \quad \text{if $x\in T^c$}
\end{cases}\]
The designer keeps $\psi$,  $T$,  $T^c$, and $\phi$, secret and outputs the sequence $\phi^m(0)$. If one wants to reconstruct the fractional jump $\psi$, according to Remark \ref{rem:reconstruction}, one would need to observe at least $(n+1)^2$ iterations of $\psi$. But in this contruction either $\psi$ or $\psi^2$ is used with probability $1/2$, therefore in order to reconstruct $\psi$ the attacker has $2^{(n+1)^2}$ systems to solve, one of which will lead to the reconstruction of $\psi$.  Notice that with this construction the orbit of $\phi$ starting at any point is bounded from below by $q^n/2$.
%\begin{color}{red}This is just the naive attack, do you have a clever one?\end{color}
%
%\begin{remark} 
%Whenever $q=2$, one can simply choose $T=U_1$ is the above construction. In this case the nature of the fractional jump come into play...
%\end{remark}

\section{Further research}

In this section we list some questions arising from the theory of fractional jumps.

Of course, any primitive polynomial is also projectively primitive. Moreover, we saw in Corollary \ref{cor:projprimconstruction} that whenever $q$ is small, finding a primitive polynomial or a projectively primitive polynomials are equivalent problems.
\begin{question} For a fixed degree (e.g. $3$), can one produce algorithms to find projectively primitive polynomials similarily to the one in \cite{bib:chou95}? 
\end{question}

Also, it would be very interesting to see attacks to the constructions in Section \ref{sec:unpredictability}
\begin{question}
Are there (non-trivial) attacks to the constructions in the subsections \ref{sec:secretprimes} and \ref{sec:forcing}?
\end{question}

Finally, we ask to compute the linear complexity of fractional jump sequences, i.e. if $\{v_i\}$ is the sequence in $\vF_q^n$ compute good lower bounds for the minimal $N$ such that there exist $c_1,\dots,c_{N-1}\in \vF_q$ such that for all $i\in \{0,\dots, q^n-1\} $ we have $v_{N+i}=\sum^{N-1}_{j=0}c_jv_{i+j}$.
\begin{question}
What is the linear complexity  of fractional jump sequences produced using the methods described in this paper?
\end{question}

Theorem \ref{thm:compcostfrac} ensures that computing a fractional jump sequence arising from a transitive projective automorphism having a representative matrix with small coefficients has small computational cost (in comparison with the Inversive Congruential Generator for example). Theorem  \ref{transitivity_characterisation} implies that the projective automorphism obtained using the companion matrix of a projectively primitive polynomial is transitive. It is therefore natural to ask the following.
\begin{question}
In which cases one can construct a projectively primitive polynomial with small coefficients?
\end{question}
For example the results in \cite{bib:AGM18} ensure that this is always possible in degree $p$ over the finite field $\vF_p$ using $x^p-x+a$.

%Also, let us ask the following.
%\begin{question}
%Can one improve the bound of Theorem \ref{thm:estjumps} concerning the distribution of the jumps?
%\end{question}

\section*{Acknowledgements}
The first author was partially supported by Simons Collaboration Grant 567168. The second author was supported by the Swiss National Science Foundation grant number 171249. The second author would like to thank the department of Mathematics of Columbia University for hosting him in September 2018, as most of these ideas were developed during this stay.

The authors would like to thank Philippe Michel, Alessandro Neri, and Violetta Weger for interesting discussions and suggestions.

\bibliographystyle{plain}
\bibliography{biblio}
\end{document}